\documentclass[12pt]{amsart}
\usepackage[a4paper,margin=2.5cm]{geometry}
\usepackage[T1]{fontenc}
\usepackage[english]{babel}
\usepackage{microtype}
\usepackage{tikz}
\usepackage{tikz-cd}
\usepackage{amsmath,amssymb,amsthm,amscd,mathtools}
\usepackage{mathrsfs}
\usepackage{enumitem}
\usepackage{xcolor}
\definecolor{citeblue}{RGB}{0,70,140}
\usepackage[
  colorlinks=true,
  linkcolor=citeblue,
  citecolor=citeblue,
  urlcolor=citeblue
]{hyperref}
\usepackage{etoolbox}

\makeatletter
\patchcmd{\@setaddresses}
  {\par\addvspace\bigskipamount\indent}
  {\par\addvspace\bigskipamount\noindent}
  {}{}
\makeatother
\makeatletter
\def\l@subsection{\@tocline{2}{0pt}{3pc}{6pc}{}}
\makeatother
\makeatletter
\renewcommand\section{\@startsection{section}{1}%
  \z@{.7\linespacing\@plus\linespacing}{.5\linespacing}%
  {\normalfont\bfseries\centering}}

\renewcommand\subsection{\@startsection{subsection}{2}%
  \z@{.5\linespacing\@plus.7\linespacing}{-.5em}%
  {\normalfont\bfseries}}
\makeatother

\numberwithin{equation}{section}

\newtheorem{theorem}{Theorem}[section]
\newtheorem{prop}[theorem]{Proposition}

\newtheorem{corollary}[theorem]{Corollary}

\newtheorem*{remark}{Remark}

\DeclareMathOperator{\Cob}{Cob}

\title[Classification of  Extended Abelian Chern--Simons Theories]{Classification of Extended\\ Abelian Chern--Simons Theories}

\author{Daniel Galviz}
\address{\noindent Yau Mathematical Sciences Center and Department of Mathematics, Tsinghua University, Beijing, China.}
\date{}

\begin{document}

\begin{abstract}
We classify extended Abelian Chern--Simons theories  with gauge group $U(1)^n$ as extended $(2+1)$-dimensional topological quantum field theories. For an even integral nondegenerate lattice $(\Lambda,K)$, let $(G_K,q_K)$ denote its discriminant quadratic module. We prove that the associated theory is determined, up to symmetric monoidal natural isomorphism, by this finite quadratic module, and that every finite quadratic module is realized as the discriminant quadratic module of an even integral nondegenerate lattice. It follows that finite quadratic modules classify extended Abelian Chern--Simons theories, pointed Abelian Reshetikhin--Turaev TQFTs, and pointed modular tensor categories.
\end{abstract}

\maketitle
\section{Introduction}
In recent work, we gave two independent constructions of Abelian Chern--Simons theories with gauge group $U(1)^n$ as extended $(2+1)$-dimensional topological quantum field theories. One is based on geometric quantization in real polarization, and the other is a rigorous functional-integral construction; the two were shown to define the same extended TQFT \cite{Galviz2,Galviz3.5}. Together with the equivalence between Abelian Chern--Simons theory and Reshetikhin--Turaev theory \cite{Galviz1,Galviz3}, this reduces the classification of Abelian Chern--Simons theories to an algebraic problem.   The aim of this paper is to solve that problem.

For an even integral nondegenerate lattice $(\Lambda,K)$ with gauge group $U(1)^n$, let
\[
(G_K,q_K), \qquad G_K=\Lambda^*/K\Lambda,
\]
denote the associated discriminant finite quadratic module. Our main result is that the corresponding Abelian Chern--Simons theory, regarded as an extended $(2+1)$-dimensional TQFT, is determined up to symmetric monoidal natural isomorphism by $(G_K,q_K)$, and that every finite quadratic module is realized in this way. Equivalently, finite quadratic modules classify  Abelian Chern--Simons theories, pointed Abelian Reshetikhin--Turaev TQFTs, and pointed modular tensor categories.

The proof has two ingredients. The first is the equivalence between Abelian Chern--Simons theory and the Reshetikhin--Turaev theory associated with the pointed modular tensor category $\mathcal{C}(G_K,q_K)$, established in \cite{Galviz1,Galviz3}. The second is a realization theorem for finite quadratic modules: Every finite quadratic module arises as the discriminant module of an even lattice; this goes back to Wall \cite[\S 6]{Wall1963}. For the explicit constructive realization used here, we follow Zhu \cite{Zhu2021}.

This result should be viewed as a classification in the toral Abelian setting; throughout this paper, Abelian Chern--Simons theory refers to the theory with gauge group a compact torus $\mathbb T\cong U(1)^n$, equivalently presented by an even integral nondegenerate lattice $(\Lambda,K)$.  Belov and Moore classified spin Abelian Chern--Simons theories with gauge group $U(1)^n$ in terms of invariants extracted from the classical lattice data, and showed that equivalence is detected by invariants of spin $3$-manifolds, or equivalently by projective modular-group representations \cite{BelovMoore}. Stirling related toral Chern--Simons theory to group categories and proved agreement of the resulting projective mapping class group representations \cite{Stirling}. On the categorical side, it is classical that pointed braided fusion categories are described by finite abelian groups equipped with quadratic forms, and that the nondegenerate case is governed by metric groups; see, for example, \cite{Joyal-Street,DGNO2010}. Wang and Wang likewise emphasize that Abelian anyon models are classified by metric groups and study explicit realization problems \cite{WangWang2020}. 

The point of the present paper is that these algebraic and representation-theoretic descriptions combine to yield a classification at the level of extended topological field theory: finite quadratic modules classify Abelian Chern–Simons theories up to symmetric monoidal natural isomorphism, rather than merely determining their closed $3$-manifold invariants, modular data, projective mapping class group representations, or underlying pointed braided categories. Thus, the present result strengthens earlier classification results by providing a complete invariant of the extended theory.

\medskip

\textbf{Acknowledgements.} I would like to thank Nicolai Reshetikhin for many helpful conversations and suggestions.

\section{Extended Theories and Equivalence Theorems}
\label{section2}

We recall the two structural results from \cite{Galviz2,Galviz3} that
enter the classification theorem.

Let $\mathbb T=\mathfrak t/\Lambda \cong U(1)^n$ be a compact torus, and let $K:\Lambda\times\Lambda \to \mathbb Z$ be an even, integral, nondegenerate symmetric bilinear form. Associated to
$(\Lambda,K)$ is the finite quadratic module
\[
G_K:=\Lambda^{*}/K\Lambda,
\qquad
q_K([x])=\exp\!\bigl(\pi i\, x^{\top}K^{-1}x\bigr).
\]
Let $\mathcal C(G_K,q_K)$ denote the corresponding pointed modular tensor
category. We write
\[
Z^{CS}_{\mathbb T,K}:\Cob^{\mathrm{ext}}_{2+1}\longrightarrow \mathrm{Vect}_{\mathbb C}
\]
for the Abelian Chern--Simons theory determined by $(\mathbb T,K)$ constructed in \cite{Galviz2}, and
\[
Z^{RT}_{\mathcal C(G_K,q_K)}:\Cob^{\mathrm{ext}}_{2+1}\longrightarrow \mathrm{Vect}_{\mathbb C}
\]
for the Walker--Maslov--corrected Reshetikhin--Turaev theory\footnote{Following \cite{Galviz2}, we use the term Reshetikhin--Turaev theory to mean the Walker--Maslov-corrected version of the theory, in which the usual Reshetikhin--Turaev construction is modified by Walker’s extension data and the Maslov correction to restore functoriality.} attached to
$\mathcal C(G_K,q_K)$ constructed in \cite{Galviz3}.

\begin{theorem}{\cite[Theorem 4.8]{Galviz2}}
Let $\mathbb T=\mathfrak t/\Lambda\cong U(1)^n$ and let
$K:\Lambda\times\Lambda\to\mathbb Z$ be even, integral, and nondegenerate.
With the $K$-twisted extended bordism convention, the assignments
\[
(\Sigma,L)\longmapsto \mathcal H_{\mathbb T,K}(\Sigma,L),
\qquad
X\longmapsto Z^{CS}_{\mathbb T,K}(X),
\]
together with the weighted extended assignment
\[
(X,L,n)\longmapsto Z^{CS}_{\mathbb T,K}(X,L,n)
:=
e^{\pi i n/4}\,
F_{L_X^{\mathbb R},L}\!\bigl(Z^{CS}_{\mathbb T,K}(X)\bigr),
\]
define a unitary extended $(2+1)$-dimensional topological quantum field theory.
\end{theorem}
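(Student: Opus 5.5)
The plan is to verify the axioms of an extended $(2+1)$-dimensional TQFT in Walker's sense (objects are surfaces $\Sigma$ with a Lagrangian $L\subset H_1(\Sigma;\mathbb R)$, morphisms are $3$-manifolds $X$ with integer weights $n$), using the non-extended data $(\Sigma,L)\mapsto\mathcal H_{\mathbb T,K}(\Sigma,L)$ and $X\mapsto Z^{CS}_{\mathbb T,K}(X)$ from geometric quantization in real polarization. The weighted assignment $e^{\pi i n/4}F_{L_X^{\mathbb R},L}(\cdot)$ should be obtained by transporting the real-polarized vector along the canonical BKS/Fourier-type pairing $F_{L',L}$ between polarizations. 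First I will establish the basic properties of the operators $F_{L',L}$: each is unitary, and they compose projectively,
\[
F_{L'',L'}\circ F_{L',L}=e^{-\pi i\,\mu(L,L',L'')/4}\,F_{L'',L},
\]
where $\mu$ is the Maslov triple index. I expect this to follow from an explicit Gaussian-sum computation on the finite Heisenberg representation of $H_1(\Sigma;\Lambda)\otimes_K(\cdot)$, which is where the evenness of $K$ enters.

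Next I will check functoriality. Disjoint union is handled first: the state spaces are tensor products, $Z^{CS}$ is multiplicative, $F$ is multiplicative, and weights add, which gives symmetric monoidality. Involutions and orientation reversal come next, giving complex-conjugate spaces, compatible with the Hermitian structure. This yields unitarity once $F$ is known to be unitary.

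The main obstacle is gluing. For $X=X_1\cup_\Sigma X_2$ with weights, Walker's rule says
\[
n=n_1+n_2-\mu\bigl(L_{X_1}^{\mathbb R}|_\Sigma,\,L,\,L_{X_2}^{\mathbb R}|_\Sigma\bigr).
\]
So I must show that the unweighted real-polarization amplitudes satisfy
\[
Z^{CS}(X)=\bigl\langle F\,Z^{CS}(X_1),\,F\,Z^{CS}(X_2)\bigr\rangle
\]
up to exactly the phase $e^{\pi i\mu/4}$. I would try to prove this by writing $Z^{CS}(X_i)$ as the delta-like distribution supported on flat connections extending over $X_i$, expressed as a theta-type sum over $\Lambda$-valued cohomology. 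The pairing across $\Sigma$ then becomes a finite Gaussian (Weil) sum. By the Milgram/Gauss-sum formula, such a sum evaluates to $|G_K|^{\cdot}$ times $e^{\pi i\sigma/4}$, where $\sigma$ is the signature of a symmetric form on $\ker$/image that Wall's non-additivity identifies with $\mu$. The signature of $K$ appears as $\sigma(K)$ times $\mu$, which is the reason for the ``$K$-twisted'' bordism convention: weights are scaled by $\operatorname{sign}K$. Matching these phases precisely is the delicate step.

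Finally I will check the remaining axioms. Identity cylinders must map to identities with weight $0$; this follows from $F_{L,L}=\mathrm{id}$ and $\mu(L,L,L)=0$. Diffeomorphism invariance follows from naturality of the real polarization and of $F$, and a change of the chosen Lagrangian $L$ is absorbed by the cocycle property above.
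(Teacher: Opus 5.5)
The paper does not prove this statement. It is quoted from \cite[Theorem~4.8]{Galviz2} and used only as input, so there is no argument here to compare your route with. Judged on its own, your overall architecture is reasonable: Walker's Lagrangian/weight formalism, BKS transports $F_{L',L}$ forming a projective system with a Maslov cocycle, and weights absorbing the anomaly. The gap is in the gluing step, which you rightly flag as the heart of the matter.

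You propose to reduce the pairing of $Z^{CS}_{\mathbb T,K}(X_1)$ and $Z^{CS}_{\mathbb T,K}(X_2)$ to a finite Gauss sum and evaluate it by Milgram's formula as $|G_K|^{(\cdot)}e^{\pi i\sigma/4}$. That evaluation is false in general.

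\begin{itemize}
\item The pairing of the two Bohr--Sommerfeld delta states is a sum over flat $\mathbb T$-connections on the glued manifold $X$, weighted by their Chern--Simons phases. For a rational homology sphere this is a Gauss sum of the $K$-twisted torsion linking form. So it is essentially $Z^{CS}_{\mathbb T,K}(X)$ itself, and it can vanish.
\item Example: take $K=(2)$, so $G_K=\mathbb Z/2$ and $q_K(1)=i$. Gluing two solid tori into $\mathbb{RP}^3$ gives an amplitude proportional to $\sum_{x\in G_K}q_K(x)^{\pm 2}=1+i^{\pm 2}=0$.
\end{itemize}

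What the gluing step needs is an identification, not an evaluation:
\begin{itemize}
\item a Mayer--Vietoris argument matching intersections of the Bohr--Sommerfeld sets of $L^{\mathbb R}_{X_1}$ and $L^{\mathbb R}_{X_2}$ with flat connections on $X$;
\item a check that the prequantum phases at those intersections are the Chern--Simons values;
\item a matching of the BKS volume/torsion normalizations, including the non-transverse case, with those in the definition of $Z^{CS}_{\mathbb T,K}(X)$.
\end{itemize}
Only after this does the Maslov phase enter, and it enters through the cocycle. The quantity $\langle F_{L_1,L}v_1,F_{L_2,L}v_2\rangle$ differs from the direct pairing of $v_1$ with $F_{L_2,L_1}v_2$ by exactly the cocycle phase, which the weight rule absorbs. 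Your Milgram/Wall computation is really a sketch of that cocycle, not of gluing. Also, for a general composition where the $X_i$ have other boundary components, the Lagrangian in Walker's weight formula is the pushforward of the outer boundary Lagrangian through $X_i$, not $L^{\mathbb R}_{X_i}|_\Sigma$.

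Two smaller points.
\begin{itemize}
\item Your first cocycle formula omits $\sigma(K)$. The relevant polarizations are $L\otimes\mathfrak t$ inside $H_1(\Sigma;\mathbb R)\otimes\mathfrak t$ with the form $\omega\otimes K$, so their Maslov index is $\sigma(K)\,\mu(L,L',L'')$. You acknowledge this later, and it is exactly what the $K$-twisted convention must compensate.
\item The identity axiom does not follow from $F_{L,L}=\mathrm{id}$ and $\mu(L,L,L)=0$. For $\Sigma\times I$, the Lagrangian $L^{\mathbb R}_X$ is the graph of the identity in $H_1(\overline{\Sigma}\sqcup\Sigma;\mathbb R)$, not $L\oplus L$. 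You must compute the transport of $Z^{CS}_{\mathbb T,K}(\Sigma\times I)$ from the diagonal to $L\oplus L$, and check that it is the canonical identity element of $\mathcal H_{\mathbb T,K}(\Sigma,L)^*\otimes\mathcal H_{\mathbb T,K}(\Sigma,L)$ with trivial phase.
\end{itemize}
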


In particular, for a connected closed oriented surface $\Sigma_g$ of genus $g$,
the associated state space satisfies
\[
\dim \mathcal H_{\mathbb T,K}(\Sigma_g,L)=|G_K|^g=|\det K|^g.
\]

\begin{remark}
The finite quadratic module $(G_K,q_K)$ is not merely a genus-one shadow of the
theory; it organizes the state spaces in every genus and recovers the standard
Abelian genus-one data at $\Sigma=T^2$. Thus the discriminant group controls the size of the Hilbert space in every
genus.

In genus one, let $\Sigma=T^2$, choose a symplectic basis $(a,b)$ of
$H_1(\Sigma;\mathbb Z)$, and let
\[
L_a=\mathbb R\langle a\rangle,\qquad L_b=\mathbb R\langle b\rangle.
\]
After choosing basepoints in the corresponding Bohr--Sommerfeld torsors, the
canonical bases are indexed by $G_K$, and the basic operators are given by
\[
S_{ba}(e_u):=F_{L_bL_a}(e_u)=|G_K|^{-1/2}\sum_{v\in G_K}\Omega_K(u,v)e'_v,
\qquad
T_a(e_u):=q_K(u)e_u.
\]
Hence the quantization canonically recovers the standard Abelian finite-quadratic data $(S,T,c)$ of topological order \cite{wen1990,wenzee1992,Wen2016}:
\[
\text{anyon sectors } \leftrightarrow G_K,\qquad
\text{anyon twists } \leftrightarrow q_K,\qquad
\text{mutual braiding } \leftrightarrow \Omega_K.
\]

This gives a useful interpretation of the classification theorem: the same
finite quadratic module that determines the genus-one modular data also determines the full extended theory; see \cite[Section 5]{Galviz2}.
\end{remark}

\begin{theorem}{\cite[Theorem 4.14]{Galviz3}}
\label{extended-equiv}
Let $\mathbb T=\mathfrak t/\Lambda\cong U(1)^n$ and let
$K:\Lambda\times\Lambda\to\mathbb Z$ be even, integral, and nondegenerate.
Let $(G_K,q_K)$ be the associated finite quadratic module, and let
$\mathcal C(G_K,q_K)$ be the corresponding pointed modular tensor category.
Then the Reshetikhin--Turaev theory $Z^{RT}_{\mathcal C(G_K,q_K)}$
and the Abelian Chern--Simons theory $Z^{CS}_{\mathbb T,K}$ are naturally isomorphic as symmetric monoidal extended $(2+1)$-dimensional
TQFTs. Equivalently, there exists a symmetric monoidal natural isomorphism
\[
\Phi:
Z^{RT}_{\mathcal C(G_K,q_K)}
\Longrightarrow
Z^{CS}_{\mathbb T,K}.
\]
\end{theorem}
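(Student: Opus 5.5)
The plan is to build $\Phi$ in layers, matching the data of the extended theory: state spaces of extended surfaces, the action of extended bordisms, and the monoidal structure. Since both $Z^{RT}_{\mathcal C(G_K,q_K)}$ and $Z^{CS}_{\mathbb T,K}$ are extended $(2+1)$-dimensional TQFTs on the same bordism category, I will use a presentation of $\Cob^{\mathrm{ext}}_{2+1}$ by generators and relations. The generators are elementary handle attachments together with the mapping class group action and the Lagrangian/Maslov data. It then suffices to give isomorphisms $\Phi_{(\Sigma,L)}:Z^{RT}(\Sigma,L)\to\mathcal H_{\mathbb T,K}(\Sigma,L)$ and check naturality on these generators.

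First, for a connected surface $\Sigma_g$ with a chosen Lagrangian $L$, I will identify the RT state space $\bigoplus_{a_1,\dots,a_g\in G_K}\mathrm{Hom}(\mathbf 1,\bigotimes_i a_i\otimes a_i^{*})\cong\mathbb C[G_K^{g}]$ with the Bohr--Sommerfeld basis of $\mathcal H_{\mathbb T,K}(\Sigma_g,L)$. The latter is indexed by $G_K^g$, using the canonical bases and torsor basepoints from the Remark. This matches dimensions $|G_K|^g=|\det K|^g$. I then extend $\Phi$ to disconnected surfaces by tensor products, which makes it monoidal by construction. Symmetry compatibility reduces to the fact that both theories use the super-free (bosonic) symmetric structure on $\mathrm{Vect}_{\mathbb C}$.

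Second, I will check naturality on generators. For the mapping class group, use Dehn twists generating $\mathrm{Mod}(\Sigma_g)$: on the Chern--Simons side the twist along $a$ acts by $T_a(e_u)=q_K(u)e_u$, and the change of polarization $F_{L_bL_a}$ acts by the Gauss-sum matrix $S$. On the RT side, the same twists act by the ribbon twist $\theta_u=q_K(u)$ and the $S$-matrix built from the braiding $\Omega_K$. So these agree in genus one, and in higher genus via the standard Lickorish generators and the gluing compatibility below.

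For the handle attachments (cup/cap, pair of pants), both sides are computed by Gaussian sums over $G_K$. The normalization $|G_K|^{-1/2}$ matches $\mathcal D^{-1}$ in RT. The remaining scalars are the Maslov/Walker framing factor $e^{\pi i n/4}$ on the CS side and the anomaly $\kappa^{n}$ with $\kappa=\mathcal D^{-1}\sum_u q_K(u)$ on the RT side. The Milgram formula $\sum_{u}q_K(u)=|G_K|^{1/2}e^{\pi i\sigma(K)/4}$ identifies these, provided the weights $n$ are matched with signature $\sigma(K)$ conventions.

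The main obstacle I expect is this last step. The Gaussian sums themselves are routine; the work lies in consistently matching the projective anomaly and the torsor basepoint choices across gluing, so that $\Phi$ is well defined and does not merely agree up to phases. My first attempt will be to fix basepoints by a quadratic refinement compatible with $q_K$, so that basepoint changes act by characters that cancel. If that fails, I will absorb residual phases into a rescaling of $\Phi$ on each generator and verify the relations of the presentation directly.
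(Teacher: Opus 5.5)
The paper does not prove this statement itself. It quotes it from \cite{Galviz3}, and the remark following the corollary in Section~2 indicates how the argument there runs: closed invariants are compared through the surgery formula, bordisms with boundary are handled through handlebody closures, and the Walker--Maslov correction cancels a closure-dependent signature defect. Your route is a legitimate alternative skeleton: match the Reshetikhin--Turaev and Bohr--Sommerfeld bases on reference objects, then check naturality on generators. Since both functors already exist, only generators matter, and the relations of a presentation play no role.

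The gap is that the step you defer as the ``main obstacle'' is the whole content of the theorem, and your fallback for it cannot work. Suppose a generator $f\colon X\to Y$ gives $\Phi_Y Z^{RT}(f)=c_f\,Z^{CS}(f)\Phi_X$ with a phase $c_f$. Rescaling $\Phi_X\mapsto\lambda_X\Phi_X$ removes $c_f$ only if $c_f=\lambda_Y/\lambda_X$, and this forces $c_f=1$ on every endomorphism. On the empty surface there is no freedom at all: monoidality gives $\Phi_\emptyset=\mathrm{id}_{\mathbb C}$. So the theorem contains exact equality of the weighted closed invariants, and the residual signature phase of the raw surgery formula must genuinely cancel rather than be absorbed. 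Likewise, the following are automorphisms of $(\Sigma,L)$, on which the two theories must agree on the nose:
\begin{itemize}
\item weight shifts;
\item mapping cylinders of classes preserving $L$, such as Dehn twists along curves whose class lies in $L$, or the hyperelliptic involution.
\end{itemize}

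What you still have to prove is the following.
\begin{itemize}
\item[(i)] The weight shift acts by the same scalar on both sides. Milgram gives $\kappa^{\pm1}=e^{\pm\pi i\sigma(K)/4}$ for Reshetikhin--Turaev, while the weighted Chern--Simons assignment as written carries $e^{\pi i n/4}$. So the $K$-twisted bordism convention must be made explicit and actually used, not left as a proviso. Given (i), define $\Phi_{(\Sigma,L')}$ by transport along weight-zero Lagrangian-change cylinders from one reference $L$. Naturality for all Lagrangian changes then follows automatically, because both functors compose these cylinders with the same Maslov weight.
\item[(ii)] On a reference object $(\Sigma_g,L)$, the two honest, non-projective actions of the Maslov-extended mapping class group must coincide on the Lickorish generators in every genus, not just in genus one. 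This also pins down the torsor basepoints. Relabeling by $u\mapsto u+u_0$ turns the twist eigenvalues $q_K(u)$ into $q_K(u+u_0)$, and these agree for all $u$ only if $u_0=0$, by nondegeneracy. Your quadratic-refinement idea (canonical here because $K$ is even) is the right start, but it is not carried out.
\item[(iii)] The handle operators must agree between reference objects. Only discrepancies of the form $\lambda_Y/\lambda_X$, such as genus-dependent powers of $|G_K|^{1/2}$, may be absorbed into $\Phi$.
\end{itemize}
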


The content of  this theorem is that the extended TQFT determined by the lattice presentation $(\Lambda,K)$ depends only on the induced finite quadratic module $(G_K,q_K)$. This is the uniqueness input needed for the classification theorem.

\begin{corollary}
Let $(\Lambda_1,K_1)$ and $(\Lambda_2,K_2)$ be even, integral, nondegenerate
lattices. If $(G_{K_1},q_{K_1})\cong (G_{K_2},q_{K_2}),$ then
\[
Z^{CS}_{\mathbb T_1,K_1}\cong Z^{CS}_{\mathbb T_2,K_2}
\]
as symmetric monoidal extended $(2+1)$-dimensional TQFTs.
\end{corollary}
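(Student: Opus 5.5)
The plan is to compose the equivalence of Theorem~\ref{extended-equiv} with a functoriality statement for the Reshetikhin--Turaev construction under equivalences of the input modular data. Fix an isomorphism of finite quadratic modules
\[
f:(G_{K_1},q_{K_1})\xrightarrow{\ \sim\ }(G_{K_2},q_{K_2}).
\]
Theorem~\ref{extended-equiv} supplies symmetric monoidal natural isomorphisms
\[
\Phi_i:Z^{RT}_{\mathcal C(G_{K_i},q_{K_i})}\Longrightarrow Z^{CS}_{\mathbb T_i,K_i},\qquad i=1,2.
\]
It therefore suffices to produce a symmetric monoidal natural isomorphism
\[
\Psi_f:Z^{RT}_{\mathcal C(G_{K_1},q_{K_1})}\Longrightarrow Z^{RT}_{\mathcal C(G_{K_2},q_{K_2})}.
\]
The desired isomorphism is then the composite $\Phi_2\circ\Psi_f\circ\Phi_1^{-1}$. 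Compositions and inverses of symmetric monoidal natural isomorphisms are again such, so nothing further is needed at that step.

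To build $\Psi_f$, first upgrade $f$ to a braided monoidal (ribbon) equivalence $F:\mathcal C(G_{K_1},q_{K_1})\to\mathcal C(G_{K_2},q_{K_2})$ of pointed modular categories. By the Joyal--Street classification \cite{Joyal-Street,DGNO2010}, pointed braided fusion categories up to braided equivalence correspond bijectively to pre-metric groups $(G,q)$. Hence an isomorphism $f$ of quadratic forms determines a braided equivalence sending the simple object $X_u$ to $X_{f(u)}$, unique up to monoidal natural isomorphism. The ribbon twist on a simple $X_u$ is the scalar $q(u)$, so $F$ preserves twists and is a ribbon equivalence. Since $q_{K_1}=q_{K_2}\circ f$, the associated data also agree:
\begin{itemize}[label={}]
\item the global dimensions, $|G_{K_1}|=|G_{K_2}|$;
\item the Gauss sums $\sum_u q(u)$, and hence the central charge and the anomaly phase entering the Walker--Maslov correction.
\end{itemize}

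Next, show that the Walker--Maslov-corrected RT construction of \cite{Galviz3} is natural with respect to ribbon equivalences. On a surface $(\Sigma,L)$, the state space is built from a decomposition of $\Sigma$ as a sum, over colorings of a pants decomposition or ribbon graph by simple objects, of morphism spaces. Define $\Psi_f(\Sigma,L)$ as the direct sum of the maps induced by $F$ on these morphism spaces, with colorings relabeled by $f$. On a bordism, RT assigns a Reshetikhin--Turaev graphical evaluation of a surgery presentation, normalized by powers of $\mathcal D$ and $p_\pm$ together with the Maslov phase. $F$ intertwines graphical evaluations, since ribbon functors preserve the Reshetikhin--Turaev functor on ribbon graphs, and every normalization factor was shown above to be equal for the two categories. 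It follows that the squares
\[
\Psi_f(\Sigma_2,L_2)\circ Z^{RT}_{1}(X,L,n)=Z^{RT}_{2}(X,L,n)\circ\Psi_f(\Sigma_1,L_1)
\]
commute. Compatibility with disjoint unions is immediate because both the construction and $\Psi_f$ are defined componentwise.

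The main obstacle is checking that $\Psi_f$ is independent of the auxiliary choices in the construction of the state spaces, so that it is a genuine natural transformation on $\Cob^{\mathrm{ext}}_{2+1}$. These choices are the decompositions, the basepoints, and the monoidal structure isomorphisms of $F$. The first approach to try is the following. The identifications between different choices are themselves values of $Z^{RT}$ on mapping cylinders, which are bordisms. Naturality on all bordisms, established above, then forces compatibility with these identifications. Any residual ambiguity coming from the non-uniqueness of $F$ is a monoidal automorphism of the identity functor. On a pointed category, such an automorphism acts on morphism spaces by characters of $G$, and these characters cancel on closed colorings by the fusion rules.
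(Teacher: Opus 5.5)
Your proposal is correct and follows the same route as the paper. You write the isomorphism as $\Phi_2\circ\Psi_f\circ\Phi_1^{-1}$, where $\Phi_1,\Phi_2$ are the two instances of Theorem~\ref{extended-equiv} and $\Psi_f$ comes from the braided ribbon equivalence $\mathcal C(G_{K_1},q_{K_1})\simeq\mathcal C(G_{K_2},q_{K_2})$ lifting $f$. The paper's proof cites the invariance of the Walker--Maslov-corrected Reshetikhin--Turaev construction under ribbon equivalence as standard, while you additionally sketch why this invariance holds.
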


\begin{proof}
If $(G_{K_1},q_{K_1})\cong (G_{K_2},q_{K_2})$, then the pointed modular tensor
categories $\mathcal C(G_{K_1},q_{K_1})$ and $
\mathcal C(G_{K_2},q_{K_2})$ are braided ribbon equivalent. Hence their  Reshetikhin--Turaev theories are naturally isomorphic as symmetric monoidal
functors. Applying the extended equivalence theorem to both lattice
presentations yields
\[
Z^{CS}_{\mathbb T_1,K_1}\cong Z^{RT}_{\mathcal C(G_{K_1},q_{K_1})}
\cong Z^{RT}_{\mathcal C(G_{K_2},q_{K_2})}
\cong Z^{CS}_{\mathbb T_2,K_2}\,\,,
\]
which proves the claim.
\end{proof}

\begin{remark}The classification theorem is genuinely a statement about extended
$(2+1)$-dimensional TQFTs, not merely about closed $3$-manifold invariants or
projective genus-one data.

At the level of closed manifolds, the comparison between Abelian Chern--Simons theory and  Reshetikhin--Turaev theory involves a residual signature phase in the raw surgery formula. This phase disappears only after passing to the Walker--Maslov--corrected extended theory. Moreover, for a bordism with boundary, the relevant closed comparison manifolds are obtained by choosing handlebody closures, and the corresponding signature defect depends on that choice. Thus equality of closed partition functions alone does not imply equality of boundary operators.

For this reason, the correct uniqueness statement for  Abelian
Chern--Simons theory is naturally formulated at the level of symmetric monoidal
extended TQFTs. It is precisely the extended equivalence theorem that shows that
the theory determined by a lattice presentation $(\Lambda,K)$ depends only on
the induced finite quadratic module $(G_K,q_K)$.
\end{remark}
\section{Classification by Finite Quadratic Modules}
%==========================================================
We now combine the equivalence theorem of Section~\ref{section2} with an explicit realization theorem for finite quadratic modules.  Recall that every finite quadratic module is isomorphic to the discriminant quadratic module of an even integral nondegenerate lattice \cite[Theorem 6]{Wall1963}. To state the realization result in a precise form,
we briefly pass to additive notation.

Let $(A,q)$ be a finite quadratic module, where $A$ is a finite abelian group and
$q \colon A \to \mathrm{U}(1)$ is a quadratic form. Write
\[
q(x)=\exp\!\bigl(2\pi i\,Q(x)\bigr),
\]
where $Q \colon A \to \mathbb{Q}/\mathbb{Z}$ is the corresponding additive quadratic form. Its associated symmetric bilinear form is
\[
b_Q(x,y):=Q(x+y)-Q(x)-Q(y)\in \mathbb{Q}/\mathbb{Z}.
\]
Orthogonal direct sums of finite quadratic modules are taken with respect to this bilinear
form. If $(\Lambda,K)$ is an even, integral, nondegenerate lattice, then $K$ induces an injective
homomorphism
\[
K \colon \Lambda \longrightarrow \Lambda^*:=\operatorname{Hom}_{\mathbb{Z}}(\Lambda,\mathbb{Z}),
\qquad
v\longmapsto K(v,-),
\]
and the associated discriminant group is $G_K:=\Lambda^*/K\Lambda.$ The induced finite quadratic form on $G_K$ is denoted by $q_K$.

\begin{prop}
\label{prop:constructive-realization}
Let $(A,q)$ be a finite quadratic module. Then there exists an even, integral, nondegenerate lattice $(\Lambda,K)$ such that
\[
(A,q)\cong (G_K,q_K),
\]
see \cite[Theorem 6]{Wall1963}. More precisely, if $q(x)=\exp(2\pi i Q(x))$, then by \cite[Definition 3.11 and Theorem 3.19]{Zhu2021},
(A,Q) admits an orthogonal decomposition into indecomposable finite quadratic modules
of the following types:
\[A^a_{p^r} =
\left(
\mathbb Z/p^r\mathbb Z,\;
x \mapsto \frac{ax^2}{p^r} + \mathbb Z
\right)
\qquad
(p \text{ an odd prime},\ r \ge 1,\ \gcd(a,p)=1),
\]
\[
A^a_{2^r} =
\left(
\mathbb Z/2^r\mathbb Z,\;
x \mapsto \frac{ax^2}{2^{r+1}} + \mathbb Z
\right)
\qquad
(r \ge 1,\ a \text{ odd}),
\]
\[
B_{2^r} =
\left(
(\mathbb Z/2^r\mathbb Z)^2,\;
(x,y)\mapsto \frac{x^2+xy+y^2}{2^r} + \mathbb Z
\right),
\]
\[
C_{2^r} =
\left(
(\mathbb Z/2^r\mathbb Z)^2,\;
(x,y)\mapsto \frac{xy}{2^r} + \mathbb Z
\right).
\]
and each such indecomposable summand is realized by an explicit even, integral,
nondegenerate lattice. Consequently, $(A,q)$ is realized by the orthogonal direct sum
of these explicit lattice blocks.
\end{prop}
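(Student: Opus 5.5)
The plan is to take the orthogonal decomposition of $(A,Q)$ into the indecomposable blocks $A^a_{p^r}$, $A^a_{2^r}$, $B_{2^r}$, $C_{2^r}$ from \cite[Theorem 3.19]{Zhu2021} as given. Then I will realize each block by an explicit even lattice and finish with a direct-sum argument. For the last step, observe that for an orthogonal sum $(\Lambda_1\oplus\Lambda_2,K_1\oplus K_2)$ one has $\Lambda^*=\Lambda_1^*\oplus\Lambda_2^*$ and $K\Lambda=K_1\Lambda_1\oplus K_2\Lambda_2$. Hence $G_{K_1\oplus K_2}=G_{K_1}\oplus G_{K_2}$, and $q_{K_1\oplus K_2}=q_{K_1}\cdot q_{K_2}$ because $K^{-1}$ is block diagonal. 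So the discriminant construction is compatible with orthogonal sums, and it suffices to handle each indecomposable block separately.

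For the blocks I will use the following lattices.
\begin{itemize}
\item $C_{2^r}$: take the rank-two lattice with Gram matrix $\begin{pmatrix}0&2^r\\2^r&0\end{pmatrix}$. It is even, and $K^{-1}=2^{-r}\begin{pmatrix}0&1\\1&0\end{pmatrix}$. The discriminant group is $(\mathbb Z/2^r)^2$ with $\tfrac12 x^\top K^{-1}x=xy/2^r$.
\item $B_{2^r}$: take $2^r\begin{pmatrix}2&1\\1&2\end{pmatrix}$, which is even and has discriminant group $(\mathbb Z/2^r)^2$ because the $A_2$ matrix has determinant $3$, prime to $2$. A direct computation gives the form $\frac{x^2-xy+y^2}{3\cdot 2^r}$ up to the change of coordinates by the $A_2$ adjugate. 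I will adjust the basis so that it matches $\frac{x^2+xy+y^2}{2^r}$, using that $3$ is invertible mod $2^{r+1}$.
\item $A^a_{2^r}$ with $a$ odd: the rank-one lattice $K=(2^r a')$ with $a a'\equiv 1 \pmod{2^{r+1}}$ gives $Q(x)=x^2/(2^{r+1}a')$. Since $a'$ is odd this equals $a x^2/2^{r+1}$ modulo $\mathbb Z$ once $a'$ is chosen so that $a'\cdot a\equiv1$ modulo $2^{r+1}$ in the appropriate rational sense. I will argue via $1/a' \equiv a$ in $\mathbb Z_{(2)}/2^{r+1}\mathbb Z_{(2)}$.
\item $A^a_{p^r}$ with $p$ odd: rank one does not work directly, since $K$ must be even. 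I will take $K=(2p^r b)$ with $b$ chosen so that $2b\cdot a\equiv1 \pmod{p^r}$. This gives discriminant group $\mathbb Z/2p^r$, which is too big. To fix this I will instead use a rank-two lattice with determinant $\pm p^r$, for example $\begin{pmatrix}2&1\\1&\frac{1+ p^r c}{2}\cdot 2\end{pmatrix}$-type matrices $\begin{pmatrix}2u&1\\1&2v\end{pmatrix}$ with $4uv-1=\pm p^r$ and $u,v$ integers. A computation of $K^{-1}$ then shows the discriminant form is $x\mapsto 2u\cdot x^2/(2(4uv-1))$ on a cyclic generator. Choosing $u$ in the right square class modulo $p$ then gives either residue class of $a$.
\end{itemize}

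The main obstacle is this last odd-$p$ case. I have to simultaneously solve $4uv-1=\pm p^r$ and control the square class of the resulting form, and $v$ may need to be negative, which yields indefinite lattices. Nondegeneracy is all that is required, so that is acceptable. If this becomes messy, I will fall back on Wall's abstract existence \cite[Theorem 6]{Wall1963}, which already yields the first assertion of the proposition. In that case the explicit blocks serve only as the refinement attributed to \cite[Theorem 3.19]{Zhu2021}.
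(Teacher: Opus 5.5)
Your reduction step matches the paper's: take the decomposition of \cite[Theorem 3.19]{Zhu2021} and use $G_{K_1\oplus K_2}=G_{K_1}\oplus G_{K_2}$ with $q_{K_1\oplus K_2}=q_{K_1}q_{K_2}$. The paper, however, does not build the blocks itself; it quotes \cite[Theorems 4.2, 4.5, 4.7]{Zhu2021}. Your own block lattices are where the argument fails, and not only in the odd-$p$ case you flagged. Of the four, only the $C_{2^r}$ lattice is correct.

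The recurring error is that you check the $2$-primary part of $G_K$ and forget that $|G_K|=|\det K|$. Every odd factor of $\det K$ therefore produces extra torsion.
\begin{itemize}
\item For $B_{2^r}$: $\det\bigl(2^r\begin{pmatrix}2&1\\1&2\end{pmatrix}\bigr)=3\cdot 4^r$, so $G_K\cong(\mathbb Z/2^r\mathbb Z)^2\oplus\mathbb Z/3\mathbb Z$. Its $2$-part is $B_{2^r}$, but no change of basis removes the $\mathbb Z/3\mathbb Z$. Your own formula $\frac{x^2-xy+y^2}{3\cdot 2^r}$ already shows this through its denominator.
\item For $A^a_{2^r}$: $K=(2^ra')$ gives $G_K\cong\mathbb Z/(2^r|a')|)\mathbb Z$. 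The identity $1/a'\equiv a$ in $\mathbb Z_{(2)}/2^{r+1}\mathbb Z_{(2)}$ only matches the $2$-part, and $x^2/(2^{r+1}a')\not\equiv ax^2/2^{r+1}$ modulo $\mathbb Z$ unless $a'=\pm1$. For example, $r=2$, $a=a'=3$ gives $K=(12)$ and $G_K\cong A^3_4\oplus A^2_3$.
\end{itemize}

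This cannot be fixed within the ranks you chose, because Milgram's formula $|G_K|^{-1/2}\sum_{x\in G_K}q_K(x)=e^{2\pi i\sigma(K)/8}$ constrains the signature.
\begin{itemize}
\item $B_2$ (the discriminant form of $D_4$) has Gauss sum $-2$. So $\sigma\equiv 4\pmod 8$, and no rank-two lattice realizes it.
\item A rank-one even lattice $(2k)$ with $G_K\cong\mathbb Z/2^r\mathbb Z$ must have $k=\pm 2^{r-1}$, so only $A^{\pm1}_{2^r}$ occur. For instance, $A^3_4$ has normalized Gauss sum $e^{2\pi i\cdot 3/8}$ and is realized by the root lattice $A_3$.
\item In your odd case, the ansatz $\begin{pmatrix}2u&1\\1&2v\end{pmatrix}$ works for $p\equiv 3\pmod 4$ with $u=\pm1$. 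But for $p\equiv 1\pmod 4$, $r$ odd and $a$ a non-residue, the Gauss sum is $-p^{r/2}$. This forces $\sigma\equiv 4\pmod 8$, so no rank-two lattice works (e.g.\ $A^2_5$).
\end{itemize}

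To close the gap you have three options. You can cite Zhu's block theorems as the paper does. You can apply Wall's theorem to each indecomposable block, which proves everything except the word ``explicit''. Or you can repair your lattices by gluing: if $G_L\cong M\perp N$ and $G_{L'}\cong(N,-Q_N)$, the overlattice of $L\oplus L'$ defined by the isotropic diagonal in $N\oplus(-N)$ has discriminant module $M$. For instance, gluing $2^rA_2$ to $A_2$ or $-A_2$ (according to the parity of $r$) along $\mathbb Z/3\mathbb Z$ gives a rank-four lattice realizing $B_{2^r}$.
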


\begin{proof}
Let $(A,q)$ be a finite quadratic module, and write $q(x)=\exp\!\bigl(2\pi i\,Q(x)\bigr)$ for the corresponding additive quadratic form $Q \colon A \to \mathbb{Q}/\mathbb{Z}.$
By the structure theory of finite quadratic modules, $(A,Q)$ decomposes orthogonally as a
finite direct sum of indecomposable finite quadratic modules of the four types listed above;
see \cite[Definition~3.11 and Theorem~3.19]{Zhu2021}. Thus there exists an isomorphism
of finite quadratic modules
\[
(A,Q)\cong \bigoplus_{j=1}^m M_j,
\]
where each $M_j$ is one of the following: $\quad
A_{p^r}^a,\qquad A_{2^r}^a,\qquad B_{2^r},\qquad C_{2^r}.$

For each summand $M_j$, choose an even, integral, nondegenerate lattice
$(\Lambda_j,K_j)$ whose discriminant quadratic module is isomorphic to $M_j$.
Such lattices exist explicitly by the realization theorems in \cite[Section 4]{Zhu2021}, as follows:

\begin{itemize}
\item if $M_j\cong A_{p^r}^a$ with $p$ odd, use \cite[Theorem~4.2]{Zhu2021};
\item if $M_j\cong A_{2^r}^a$, use \cite[Theorem~4.5]{Zhu2021};
\item if $M_j\cong B_{2^r}$ or $M_j\cong C_{2^r}$, use \cite[Theorem~4.7]{Zhu2021}.
\end{itemize}

Now define
\[
\Lambda:=\bigoplus_{j=1}^m \Lambda_j,
\qquad
K:=\bigoplus_{j=1}^m K_j.
\]
Since each $K_j$ is an even, integral, nondegenerate symmetric bilinear form on
$\Lambda_j$, it follows that $K$ is an even, integral, nondegenerate symmetric bilinear
form on $\Lambda$. We now compute the discriminant quadratic module of $(\Lambda,K)$.
Because duals commute with finite direct sums, there is a canonical isomorphism
\[
\Lambda^*
\cong
\bigoplus_{j=1}^m \Lambda_j^*.
\]
Under this identification, the image of $K \colon \Lambda \to \Lambda^*$
is exactly $K\Lambda
=
\bigoplus_{j=1}^m K_j\Lambda_j.$
Therefore
\[
G_K
=
\Lambda^*/K\Lambda
\cong
\bigoplus_{j=1}^m \Lambda_j^*/K_j\Lambda_j
=
\bigoplus_{j=1}^m G_{K_j}.
\]

It remains to identify the quadratic form. Let
\[
[x]=([x_1],\dots,[x_m])\in \bigoplus_{j=1}^m G_{K_j},
\]
with $[x_j]\in G_{K_j}$. Since $K$ is block diagonal, the induced discriminant quadratic
form is the orthogonal direct sum of the forms $q_{K_j}$. Concretely,
\[
q_K([x])
=
\prod_{j=1}^m q_{K_j}([x_j]).
\]

Let $Q_K$ and $Q_{K_j}$ denote the additive quadratic forms corresponding to
$q_K$ and $q_{K_j}$, respectively. Since $K$ is block diagonal, the induced
additive discriminant quadratic form is the orthogonal direct sum of the forms
$Q_{K_j}$. Therefore
\[
(G_K,Q_K)
\cong
\bigoplus_{j=1}^m (G_{K_j},Q_{K_j})
\cong
\bigoplus_{j=1}^m M_j
\cong
(A,Q).
\]
Exponentiating the additive quadratic forms then gives
\[
(G_K,q_K)\cong (A,q).
\]

Thus there exists an even, integral, nondegenerate lattice $(\Lambda,K)$ realizing the
given finite quadratic module $(A,q)$. This proves the proposition.
\end{proof}

The preceding proposition gives the existence of lattice presentations for arbitrary finite quadratic modules. The equivalence theorem of Section~\ref{section2} shows that the corresponding Abelian Chern--Simons theory depends only on the induced discriminant quadratic module. Together, these facts yield the classification theorem. The classification problem has two parts: uniqueness and realization. Uniqueness asserts that lattice presentations with isomorphic discriminant quadratic modules determine equivalent extended TQFTs, up to symmetric monoidal natural isomorphism. Realization asserts that every finite quadratic module occurs as the discriminant form of an even integral nondegenerate lattice. The classification theorem is obtained by combining these two statements.
\begin{theorem}\label{thm:classification-main}
The assignment $(\Lambda,K)\longmapsto (G_K,q_K)$ descends to a bijection from equivalence classes of presentations of  Abelian Chern--Simons theory, where
\[
(\Lambda,K)\sim (\Lambda',K')
\quad\Longleftrightarrow\quad
Z^{\mathrm{CS}}_{\mathbb T,K}\cong Z^{\mathrm{CS}}_{\mathbb T',K'}
\]
as symmetric monoidal extended $(2+1)$-dimensional TQFTs, to isomorphism classes of finite quadratic modules.

Equivalently, two presentations define equivalent  Abelian Chern--Simons theories if and only if their associated finite quadratic modules are isomorphic, and every finite quadratic module arises from some  presentation.
\end{theorem}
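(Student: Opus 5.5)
We split the statement into well-definedness, injectivity and surjectivity of the induced map
\[
[(\Lambda,K)] \longmapsto [(G_K,q_K)].
\]

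\textbf{Surjectivity} is immediate from Proposition~\ref{prop:constructive-realization}. Given a finite quadratic module $(A,q)$, the proposition supplies an even integral nondegenerate lattice $(\Lambda,K)$ with $(G_K,q_K)\cong(A,q)$. The class of $(\Lambda,K)$ then maps to the class of $(A,q)$.

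\textbf{Well-definedness} requires: if $Z^{CS}_{\mathbb T,K}\cong Z^{CS}_{\mathbb T',K'}$ as symmetric monoidal extended TQFTs, then $(G_K,q_K)\cong(G_{K'},q_{K'})$. This is the converse of the Corollary in Section~\ref{section2}, and it is the main obstacle. I would recover the quadratic module intrinsically from the extended theory, as follows.

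\begin{enumerate}
\item By Theorem~\ref{extended-equiv} we may replace each theory by $Z^{RT}_{\mathcal C(G_K,q_K)}$. It therefore suffices to show that the Reshetikhin--Turaev theory determines its pointed modular category up to braided equivalence.
\item For this, use the genus-one data described in the Remark after the first theorem. The state space of $T^2$ has a canonical basis indexed by $G_K$, on which $T_a$ acts diagonally with eigenvalues $q_K(u)$, and $S$ has entries $|G_K|^{-1/2}\Omega_K(u,v)$.
\item A symmetric monoidal natural isomorphism gives, on $T^2$, a linear isomorphism intertwining the mapping class group actions. It also intertwines the operators of the solid torus with Wilson lines, i.e.\ the extended/boundary structure.
\item The group structure is read off from the fusion rules. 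The Verlinde formula recovers the $N_{uv}^w$, hence the group law on the labels, from $S$ and the distinguished vacuum vector given by the empty solid torus. The isomorphism must therefore match labels by a group isomorphism $G_K\to G_{K'}$.
\item Matching $T$-eigenvalues then shows that this isomorphism carries $q_K$ to $q_{K'}$.
\end{enumerate}

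The delicate point is step~4: a bare vector-space isomorphism need not send basis vectors to basis vectors. One has to argue that the labelled bases are characterized intrinsically, for instance as the images of solid tori with colored cores, or as simultaneous eigenvectors of the Dehn twists along $a$ together with the Verlinde idempotents. If this direct argument becomes cumbersome, an alternative is to invoke the standard fact that an extended $(2+1)$-TQFT determines its modular category as the value on the circle. Then natural isomorphism of the extended theories gives $\mathcal C(G_K,q_K)\simeq\mathcal C(G_{K'},q_{K'})$ as braided fusion categories. The Joyal--Street classification \cite{Joyal-Street} says pointed braided fusion categories correspond to pre-metric groups, which yields $(G_K,q_K)\cong(G_{K'},q_{K'})$.

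\textbf{Injectivity} is exactly the Corollary in Section~\ref{section2}. If $(G_K,q_K)\cong(G_{K'},q_{K'})$, the corollary gives $Z^{CS}_{\mathbb T,K}\cong Z^{CS}_{\mathbb T',K'}$, so $(\Lambda,K)\sim(\Lambda',K')$.

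Combining well-definedness, injectivity and surjectivity gives the bijection and the equivalent ``if and only if'' formulation of Theorem~\ref{thm:classification-main}.
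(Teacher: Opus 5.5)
\textbf{Assessment.} Your surjectivity step (Proposition~\ref{prop:constructive-realization}) and your injectivity step (the Corollary of Section~\ref{section2}: braided equivalence $\mathcal C(G_K,q_K)\simeq\mathcal C(G_{K'},q_{K'})$, then Theorem~\ref{extended-equiv} on both sides) are exactly the paper's proof. The paper's proof consists only of these two steps; it argues the direction ``isomorphic modules give isomorphic theories'' twice. It never shows that $Z^{\mathrm{CS}}_{\mathbb T,K}\cong Z^{\mathrm{CS}}_{\mathbb T',K'}$ forces $(G_K,q_K)\cong(G_{K'},q_{K'})$. You are right that this direction is needed for the map to descend and for the ``only if'', and that it carries the real content.

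\textbf{The gap.} Your argument for this converse does not go through as written. Step~4 is circular. The Verlinde formula extracts $N_{uv}^w$ from the \emph{matrix} of $S$ in the labelled basis, and that basis is exactly what an arbitrary $\Phi_{T^2}$ is not known to preserve. Eigenvalues of $T$ cannot single out that basis either, because twists are degenerate in general: for $C_2$, three of the four labels have $q=1$. Both remedies you suggest are unavailable here. As used in this paper, $\Cob^{\mathrm{ext}}_{2+1}$ is Walker's category: its objects are closed surfaces with Lagrangians $(\Sigma,L)$, its morphisms are weighted bordisms $(X,L,n)$, and it takes values in $\mathrm{Vect}_{\mathbb C}$. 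It contains no coloured ribbon graphs, so $\Phi$ is not required to intertwine solid tori with Wilson lines. It also assigns nothing to the circle, so the once-extended fact that the value on $S^1$ recovers the modular category does not apply.

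\textbf{How to close it.} The missing idea is an intrinsic description of the labelled lines in $Z(T^2)$ that uses only morphisms of $\Cob^{\mathrm{ext}}_{2+1}$.
\begin{enumerate}
\item Work with $Z^{\mathrm{RT}}_{\mathcal C(G,q)}$, as in your step~1. Let $v_x$, $x\in G$, be the vectors of solid tori with core coloured $x$; these are used only for computation inside the RT model, not as morphisms that $\Phi$ must respect.
\item For a pair of pants $P$, the weighted bordism $P\times S^1\colon T^2\sqcup T^2\to T^2$ makes $Z(T^2)$ a commutative algebra with $v_x\cdot v_y=\lambda\,v_{x+y}$ for a constant $\lambda\neq0$. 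This is a rescaled $\mathbb C[G]$ with unit $\lambda^{-1}v_0$.
\item The primitive idempotents $e$ of this algebra are intrinsic. Since $b_Q$ is nondegenerate, the operator $Z(S)$ of the weighted mapping cylinder of the $S$-transformation sends each $e$ to a nonzero multiple of a single $v_x$, and distinct idempotents to distinct $x$.
\item Hence the set of lines $\{\mathbb C\,Z(S)e\}=\{\mathbb C v_x\}_{x\in G}$ is intrinsic. The induced product of lines makes it a group isomorphic to $G$.
\item The ratio of the eigenvalue of the meridian Dehn twist on $\mathbb C v_x$ to its eigenvalue on $\mathbb C v_0$ (the line of the unit) is $q(x)^{\pm1}$. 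The sign is fixed by conventions common to both theories.
\item None of this changes if operators are rescaled, so the Maslov and weight phases, which may differ between the two theories, are harmless.
\item $\Phi_{T^2}$ intertwines these operators exactly, by naturality together with monoidality $\Phi_{T^2\sqcup T^2}=\Phi_{T^2}\otimes\Phi_{T^2}$. It therefore carries one set of lines to the other compatibly with products and twist ratios, which gives $(G_K,q_K)\cong(G_{K'},q_{K'})$.
\end{enumerate}
With this inserted, your three-part argument proves the theorem, including the direction the paper's proof leaves out.
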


\begin{proof}
Let $(\Lambda,K)$ be an even, integral, nondegenerate lattice. By Theorem~\ref{extended-equiv}, the associated Abelian Chern--Simons theory $Z^{\mathrm{CS}}_{\mathbb T,K}$
is naturally isomorphic, as a symmetric monoidal functor, to the pointed Reshetikhin--Turaev TQFT associated with the finite quadratic module $(G_K,q_K)$:
\[
Z^{\mathrm{CS}}_{\mathbb T,K}
\cong
Z^{\mathrm{RT}}_{\mathcal C(G_K,q_K)}.
\]
Hence the equivalence class of the extended TQFT determined by $(\Lambda,K)$ depends only on the finite quadratic module $(G_K,q_K)$. Conversely, let $(A,q)$ be any finite quadratic module. By Proposition~\ref{prop:constructive-realization}, there exists an even, integral, nondegenerate lattice $(\Lambda,K)$ such that
\[
(A,q)\cong (G_K,q_K).
\]
Applying Theorem~\ref{extended-equiv} to $(\Lambda,K)$, we obtain an Abelian Chern--Simons theory whose equivalence class depends only on $(A,q)$. It remains to show uniqueness. Suppose $(\Lambda,K)$ and $(\Lambda',K')$ are even, integral, nondegenerate lattices with
\[
(G_K,q_K)\cong (G_{K'},q_{K'}).
\]
Then the pointed modular tensor categories $\mathcal C(G_K,q_K)$ and $\mathcal C(G_{K'},q_{K'})$ are braided ribbon equivalent. Therefore their Reshetikhin--Turaev TQFTs are naturally isomorphic as symmetric monoidal functors. Using Theorem~\ref{extended-equiv} for both lattices, it follows that
\[
Z^{\mathrm{CS}}_{\mathbb T,K}
\cong
Z^{\mathrm{CS}}_{\mathbb T',K'}
\]
as symmetric monoidal extended $(2+1)$-dimensional TQFTs. Thus finite quadratic modules classify Abelian Chern--Simons theories up to symmetric monoidal natural isomorphism.
\end{proof}

\begin{corollary}
\label{cor:equivalent-classifications}
Isomorphism classes of finite quadratic modules are in bijection with equivalence classes of:
\begin{enumerate}
\item[\textup{(i)}] pointed modular tensor categories in the  Abelian setting;
\item[\textup{(ii)}] pointed Abelian extended  Reshetikhin--Turaev TQFTs;
\item[\textup{(iii)}] Abelian extended Chern--Simons TQFTs.
\end{enumerate}
\end{corollary}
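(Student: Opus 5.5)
The plan is to construct three maps out of the set of isomorphism classes of finite quadratic modules and to show that each one is a bijection. For (i), send $(A,q)$ to the pointed category $\mathcal C(A,q)$. The fusion rules are given by $A$, the braiding and twist by $q$, and $b_Q$ is the associated bilinear form. We take as the definition of ``pointed modular tensor category in the Abelian setting'' a pointed braided fusion category that is modular, so that $b_Q$ is nondegenerate. This is exactly the convention under which finite quadratic modules are the nondegenerate metric groups. We then invoke the classical Joyal--Street/DGNO classification \cite{Joyal-Street,DGNO2010}. It says that pointed braided fusion categories up to braided equivalence correspond bijectively to pre-metric groups $(\mathrm{Inv}(\mathcal C),\theta)$, via the functor that takes a category to its group of invertible objects together with the quadratic form given by the self-braiding. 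Restricted to the modular case, this gives the bijection in (i). We should also check that the ribbon structure is forced: a pointed modular category carries a canonical spherical structure with $\theta_g=c_{g,g}$, so braided equivalence and ribbon equivalence coincide here.

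For (iii), Theorem~\ref{thm:classification-main} is precisely the statement that $(\Lambda,K)\mapsto(G_K,q_K)$ induces a bijection from equivalence classes of Abelian Chern--Simons TQFTs to isomorphism classes of finite quadratic modules. Surjectivity of that map is Proposition~\ref{prop:constructive-realization}. Its inverse sends $(A,q)$ to $Z^{CS}_{\mathbb T,K}$ for any realizing lattice.

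For (ii), consider the map $(A,q)\mapsto Z^{RT}_{\mathcal C(A,q)}$. It is well defined on isomorphism classes, as in the proof of the Corollary above: isomorphic modules give braided ribbon equivalent categories, hence isomorphic Reshetikhin--Turaev theories. It is surjective by the definition of ``pointed Abelian RT TQFT'' as $Z^{RT}_{\mathcal C}$ for some $\mathcal C$ as in (i). The main obstacle is injectivity: we must show that $Z^{RT}_{\mathcal C(A,q)}\cong Z^{RT}_{\mathcal C(A',q')}$ forces $(A,q)\cong(A',q')$. The theorems recalled so far give only the forward direction. One approach is to use the equivalence of the RT theories with the Chern--Simons theories of realizing lattices (Proposition~\ref{prop:constructive-realization} together with Theorem~\ref{extended-equiv}) and reduce to (iii). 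This does not settle the question, though, because the injectivity half of Theorem~\ref{thm:classification-main} is itself the point where the recovery must happen.

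So the approach I would use first is to recover $(A,q)$ directly from the extended theory. Evaluate on the torus $T^2$ with a chosen Lagrangian. The symmetric monoidal natural isomorphism gives an isomorphism of state spaces that intertwines the mapping-cylinder operators. In the basis indexed by $A$, as in the Remark after the first theorem, the Dehn twist $T_a$ acts diagonally with eigenvalues $q(u)$, and $S$ encodes $b_Q$. The isomorphism therefore identifies the modular data $(S,T)$, with no projective ambiguity since the Maslov correction fixes the phases. The natural first guess for reconstructing the group is the Verlinde algebra of the labels, but it is the wrong structure: its multiplication is commutative and associative on the basis indexed by $A$, yet it carries no information distinguishing different abelian groups of the same order. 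To recover $A$ itself, I would use the fusion coefficients obtained from the pair of pants, or equivalently the Verlinde formula. Being extracted from the theory, these are invariant under the natural isomorphism, and they give the fusion ring $\mathbb Z[A]$ with its distinguished basis; that is enough to recover $A$ as a group. The eigenvalues of $T$ attached to the basis elements then recover $q$. The delicate point is that a natural isomorphism of TQFTs need not preserve the distinguished bases. I expect to handle this by characterizing the simple labels intrinsically, as the idempotents of the Verlinde algebra on $\mathcal H(T^2)$, and these are preserved by any algebra isomorphism. Once (ii) is injective, composing the three bijections gives the corollary.
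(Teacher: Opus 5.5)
\textbf{Where your proposal agrees with the paper.} For (i) and (iii) you follow the paper. Its proof is just the chain: Joyal--Street/DGNO, functoriality of Reshetikhin--Turaev under braided ribbon equivalence, Theorem~\ref{extended-equiv}, and then Theorem~\ref{thm:classification-main}. In particular, the paper obtains (ii) by exactly the transport through Theorem~\ref{extended-equiv} that you set aside.

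Your diagnosis of that step is right. The proof of Theorem~\ref{thm:classification-main} shows only that isomorphic discriminant modules give isomorphic theories, plus realization. It never shows that $Z^{CS}_{\mathbb T,K}\cong Z^{CS}_{\mathbb T',K'}$ forces $(G_K,q_K)\cong(G_{K'},q_{K'})$. So the paper does not supply the injectivity you need, and your citation of Theorem~\ref{thm:classification-main} for (iii) has the same hole. A working recovery argument for RT theories would close both, via Theorem~\ref{extended-equiv}.

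\textbf{The gap in your recovery argument.} The simple labels are \emph{not} the idempotents of the Verlinde algebra.
\begin{itemize}
\item In the label basis the pair-of-pants product $Z(P\times S^1)$ is, up to an overall scalar, $e_u e_v=e_{u+v}$, so $\mathcal H(T^2)\cong\mathbb C[A]$. With its label basis this \emph{does} determine $A$; it is the basis-free algebra $\cong\mathbb C^{|A|}$ that does not. Your ``pair-of-pants fusion coefficients'' are this same product, so they cannot help until the basis is pinned down.
\item The primitive idempotents of $\mathbb C[A]$ are the character projectors $p_\chi=|A|^{-1}\sum_v\overline{\chi(v)}\,e_v$, $\chi\in\widehat A$, and they satisfy $p_\chi p_\psi=\delta_{\chi\psi}p_\chi$. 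A basis of idempotents therefore has trivial fusion rules and remembers only $|A|$. That is precisely the information loss you wanted to avoid.
\end{itemize}

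\textbf{How to repair it.} Combine the product with $S$.
\begin{itemize}
\item In the label basis, $S(e_u)$ is a nonzero multiple of $\sum_v\Omega(u,v)e_v$ (the $S$-formula in the Remark of Section~\ref{section2}). This is a multiple of $p_\chi$ with $\chi=\overline{\Omega(u,-)}$, and nondegeneracy of $\Omega$ makes $u\mapsto\chi$ bijective. Hence the lines $\mathbb C e_u$ are exactly the lines $S^{-1}(\mathbb C p)$, with $p$ a primitive idempotent.
\item Naturality of $\eta_{T^2}$ with respect to $P\times S^1$, the solid torus and the morphism realizing $S$, together with monoidality, makes $\eta_{T^2}$ a unital algebra isomorphism commuting with $S$. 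So it permutes these lines and defines a bijection $\phi\colon A\to A'$.
\item Since $e_ue_v\in\mathbb C^\times e_{u+v}$, $\phi$ is a homomorphism.
\item Comparing eigenvalues of the Dehn twist $T$, normalized by its eigenvalue on the unit line $\mathbb C e_0$, gives $q'\circ\phi=q$.
\end{itemize}

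\textbf{A smaller point in (i).} Your claim that the ribbon structure is forced is false when $|A|$ is even. Spherical structures on $\mathcal C(A,q)$ are twisted by characters $\chi\colon A\to\{\pm1\}$, giving $\theta_a=\chi(a)q(a)$. So (i) must be restricted to the canonical structure with all dimensions equal to $1$.
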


\begin{proof}
The classification of pointed modular tensor categories by finite quadratic modules is standard \cite{Joyal-Street, DGNO2010}. Braided ribbon equivalent modular categories determine naturally isomorphic symmetric monoidal Reshetikhin--Turaev TQFTs \cite{Reshetikhin:1991,Turaev1994}, and by Theorem~\ref{extended-equiv} these are naturally isomorphic to the corresponding Abelian Chern--Simons theories. The result now follows from Theorem~\ref{thm:classification-main}.
\end{proof}

\begin{theorem}
Let $Z:\Cob^{\mathrm{ext}}_{2+1}\longrightarrow \mathrm{Vect}_{\mathbb C}$ be an extended Abelian TQFT in the sense that $Z$ is naturally isomorphic to
the Reshetikhin--Turaev TQFT of a pointed modular
tensor category. Then there exists a finite quadratic module $(G,q)$ and an
even, integral, nondegenerate lattice $(\Lambda,K)$ such that
\[
Z \cong Z^{RT}_{\mathcal C(G,q)}
\cong Z^{CS}_{\mathbb T,K},
\qquad
\mathbb T=\mathfrak t/\Lambda.
\]
In particular, equivalence classes of such extended Abelian TQFTs are classified
by isomorphism classes of finite quadratic modules.
\end{theorem}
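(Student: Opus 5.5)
The plan is to assemble the statement directly from three earlier ingredients: the categorical classification of pointed modular tensor categories, Proposition~\ref{prop:constructive-realization}, and Theorem~\ref{extended-equiv}. By hypothesis there is a pointed modular tensor category $\mathcal C$ and a symmetric monoidal natural isomorphism $Z\cong Z^{RT}_{\mathcal C}$.

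\textbf{Step 1 (categorical input).} By the classification of pointed braided fusion categories \cite{Joyal-Street,DGNO2010}, set $G$ to be the group of isomorphism classes of simple objects of $\mathcal C$, with the quadratic form $q(g)=\theta_g$ given by the twist. Modularity of $\mathcal C$ is equivalent to nondegeneracy of the associated bilinear form, so $(G,q)$ is a finite quadratic module. Moreover $\mathcal C$ is braided ribbon equivalent to $\mathcal C(G,q)$.

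\textbf{Step 2 (passing to TQFTs).} I would then argue, exactly as in the proof of Theorem~\ref{thm:classification-main}, that braided ribbon equivalent modular categories give naturally isomorphic Walker--Maslov-corrected Reshetikhin--Turaev theories. This yields
\[
Z\cong Z^{RT}_{\mathcal C}\cong Z^{RT}_{\mathcal C(G,q)}.
\]

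\textbf{Step 3 (lattice realization).} Apply Proposition~\ref{prop:constructive-realization} to $(G,q)$ to obtain an even, integral, nondegenerate lattice $(\Lambda,K)$ with $(G_K,q_K)\cong(G,q)$. Hence $\mathcal C(G_K,q_K)$ is ribbon equivalent to $\mathcal C(G,q)$, and by Step 2 again $Z^{RT}_{\mathcal C(G,q)}\cong Z^{RT}_{\mathcal C(G_K,q_K)}$. Theorem~\ref{extended-equiv}, with $\mathbb T=\mathfrak t/\Lambda$, then gives $Z^{RT}_{\mathcal C(G_K,q_K)}\cong Z^{CS}_{\mathbb T,K}$. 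Composing these natural isomorphisms gives the displayed chain.

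\textbf{Step 4 (classification).} The final sentence follows from Corollary~\ref{cor:equivalent-classifications}(ii), or directly from Theorem~\ref{thm:classification-main}. The map $Z\mapsto(G,q)$ is well defined: any two choices of $\mathcal C$ yield Chern--Simons theories that are isomorphic as extended TQFTs, and by the injectivity half of Theorem~\ref{thm:classification-main} their quadratic modules are isomorphic. Surjectivity is Step 3 applied to an arbitrary $(A,q)$.

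\textbf{Main obstacle.} The delicate point is Step 2 together with the injectivity in Step 4. One must check that a ribbon equivalence induces a symmetric monoidal natural isomorphism of the \emph{extended, Maslov-corrected} theories, not just equal closed invariants. In particular, the central charge and Gauss-sum phases, $e^{2\pi i c/8}$ with $c$ determined by $(G,q)$ via Milgram's formula, must match. Since these phases are determined by $q$ alone, I expect this to go through. Conversely, I would recover $(G,q)$ from $Z$ via the $T$-matrix on $Z(T^2)$ together with the fusion data, so that isomorphic extended theories have isomorphic quadratic modules.
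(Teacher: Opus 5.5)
Your Steps 1--3 are exactly the paper's proof (classification of pointed modular categories, ribbon equivalence giving isomorphic Reshetikhin--Turaev theories, Proposition~\ref{prop:constructive-realization}, then Theorem~\ref{extended-equiv}); for the final clause the paper says nothing further and implicitly relies on Theorem~\ref{thm:classification-main}, as you do. Your flagged obstacle is well placed, because the paper's proof of Theorem~\ref{thm:classification-main} proves only that isomorphic quadratic modules give isomorphic theories, together with realization, and never the direction you cite (isomorphic theories have isomorphic modules), so your recovery of $(G,q)$ from $Z$ is genuinely needed. It can be made to work: use the algebra structure on $Z(T^2)$ given by $P\times S^1$ ($P$ a pair of pants) with unit the solid torus, take $G$ to be the set of lines spanned by $S^{-1}(e)$ for minimal idempotents $e$ (these are the simple-object lines, and they multiply as $G$), and read off $q$ as ratios of meridian Dehn-twist eigenvalues relative to the unit line, all of which any symmetric monoidal natural isomorphism preserves.
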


\begin{proof}
Let $\mathcal C$ be a pointed modular tensor category such that $Z\cong Z^{RT}_{\mathcal C}.$
By the standard classification of pointed modular tensor categories, there
exists a finite quadratic module $(G,q)$ such that
$\mathcal C \simeq \mathcal C(G,q)$ as braided ribbon categories. Hence
\[
Z\cong Z^{RT}_{\mathcal C(G,q)}.
\]
By Proposition~\ref{prop:constructive-realization}, choose an even, integral, nondegenerate lattice
$(\Lambda,K)$ with $(G,q)\cong (G_K,q_K)$. Then Theorem~\ref{extended-equiv} gives
\[
Z^{RT}_{\mathcal C(G_K,q_K)}\cong Z^{CS}_{\mathbb T,K}.
\]
Since $(G,q)\cong (G_K,q_K)$, the corresponding pointed modular tensor
categories are braided ribbon equivalent, so
\[
Z^{RT}_{\mathcal C(G,q)}\cong Z^{RT}_{\mathcal C(G_K,q_K)}.
\]
Combining these isomorphisms yields
\[
Z\cong Z^{RT}_{\mathcal C(G,q)}\cong Z^{CS}_{\mathbb T,K},
\]
as claimed.
\end{proof}

\begin{remark}
The classification theorem may be summarized by the following diagram:
\[
\begin{tikzpicture}[baseline=(current bounding box.center), every node/.style={align=center}]
  \node (A) at (0,2) {\text{Finite quadratic}\\\text{modules}};
  \node (B) at (6,2) {\text{Pointed modular}\\\text{tensor categories}};
  \node (C) at (6,0) {\text{Pointed Abelian}\\\text{Reshetikhin--Turaev TQFTs}};
  \node (D) at (0,0) {\text{ Abelian}\\\text{Chern--Simons TQFTs}};

  \draw[<->, thick] (A) -- (B);
  \draw[<->, thick] (B) -- (C);
  \draw[<->, thick] (C) -- (D);
  \draw[<->, thick] (D) -- (A);
\end{tikzpicture}
\]
The point is that the same Abelian theory admits four equivalent descriptions. A lattice presentation $(\Lambda,K)$ determines a finite quadratic module $(G_K,q_K)$; this quadratic data determines a pointed modular tensor category and hence a pointed Abelian Reshetikhin--Turaev TQFT; and by the extended equivalence theorem this is precisely the corresponding  Abelian Chern--Simons theory. Conversely, every finite quadratic module is realized by an even integral nondegenerate lattice. Thus finite quadratic modules are the intrinsic invariants classifying  Abelian Chern--Simons theories up to symmetric monoidal natural isomorphism.
\end{remark}

\bibliographystyle{alpha}
\renewcommand{\refname}{References}
\bibliography{refs}
\end{document}